\newcommand{\ba}{\begin{eqnarray}}
\newcommand{\ea}{\end{eqnarray}}
\newcommand{\qed}{\hfill\rule{1.7ex}{1.7ex}}
\newtheorem{thm}{Theorem}[section]
\newtheorem{theorem}[thm]{Theorem}
\newtheorem{lemma}[thm]{Lemma}
\newcommand*{\rom}[1]{\expandafter\@slowromancap\romannumeral #1@}
\author{
Maidoun Mortada\footnote{KALMA Laboratory, Faculty of Sciences, Lebanese University, Baalbek, Lebanon; 
Graph Theory and Operation Research, Department of Mathematics and Physics, Lebanese International University (LIU), Beirut, Lebanon;
and Basic and Applied Sciences Research, Al Maaref University, Beirut, Lebanon.}
\footnote{Both authors contributed equally to this work.}
\and
Ayman El Zein\footnote{Computer Science Department, University of Sciences and Arts in Lebanon, Beirut, Lebanon; 
KALMA Laboratory, Faculty of Sciences, Lebanese University, Beirut, Lebanon.}
\footnotemark[2]
}
\begin{document} 
\title{A Short Proof that Every Claw-Free Cubic Graph is $(1,1,2,2)$-Packing Colorable}
\maketitle
\begin{abstract}
It was recently proved that every claw-free cubic graph admits a \((1,1,2,2)\)-packing coloring—that is, its vertex set can be partitioned into two 1-packings and two 2-packings. This result was established by Brešar, Kuenzel, and Rall [\textit{Discrete Mathematics} 348 (8) (2025), 114477]. In this paper, we provide a simpler and shorter proof.
\end{abstract}

\noindent\textbf{Mathematics Subject Classification:} 05C15\\
\textbf{Keywords}: graph, coloring, $S$-packing coloring, packing chromatic number, cubic graph, claw-free graph.

\section{Introduction}
All graphs considered in this paper are finite and simple. A graph $G$ is said to be \emph{cubic} if $d_G(x) = 3$ for every vertex $x \in V(G)$. A claw is the complete bipartite graph $K_{1,3}$. A graph is said to be claw-free if it contains no induced subgraph isomorphic to a claw. In particular, if a graph is cubic and claw-free, then every vertex belongs to a triangle.

Given a non-decreasing sequence $S = (a_1, a_2, \ldots, a_r)$ of positive integers, an \emph{$S$-packing coloring} of a graph $G$ is a mapping $f : V(G) \to [r]$ such that for every pair of vertices $u, v$ with $f(u) = f(v) = i$, the distance between $u$ and $v$ in $G$ is strictly greater than $a_i$. When $S = (1,2,\ldots,r)$, the coloring is referred to as a \emph{packing coloring}, and the minimum such $r$ is called the \emph{packing chromatic number} of $G$, $\chi_{\rho}(G)$. This parameter was initially introduced by Goddard et al.~\cite{d}. Since then, it has received significant attention, particularly in the context of (sub)cubic graphs (see \cite{ bkl, 3, bf, 11, 8, 9, AM1, AM2, 16, 24, m, LW, LZZ24, AM3,26, 27, mai, mai1, i, a}). A comprehensive overview of developments in this area can be found in the survey by Bre\v{s}ar, \emph{et al.}~\cite{bfk}.

Recently, Brešar, Kuenzel, and Rall~\cite{BKR} established that every claw-free cubic graph admits a $(1,1,2,2)$-packing coloring. This confirms that the conjecture posed by Brešar et al.~\cite{9}-stating that the packing chromatic number of subdivisions of subcubic graphs is at most 5—holds for the class of claw-free cubic graphs. 
In this paper, we present a new and much shorter proof of this result. Our approach is purely combinatorial and avoids any structural decomposition. The key ingredient is a general lemma asserting that every cubic graph admits two disjoint 2-packings whose removal eliminates all triangles. This lemma is of independent interest, as it can be applied to various subclasses of cubic graphs. In the case of claw-free cubic graphs, the absence of triangles after applying the lemma allows a simple completion of the coloring by a $(1,1)$-partition of the remaining vertices, thereby yielding the desired $(1,1,2,2)$-packing coloring in a direct and transparent way.

\section{The Key Lemma: Breaking All Triangles with Two 2-Packings}
The cornerstone of our new proof is a simple yet powerful structural observation about cubic graphs. We show that it is always possible to select two disjoint 2-packings whose removal eliminates every triangle in the graph. This lemma not only serves as the foundation of our main result on claw-free cubic graphs but also stands as an independent tool that may find applications in other packing-coloring contexts. 
\begin{lemma}\label{keylemma}
    Let $G$ be a cubic graph. Then there exist two disjoint 2-packings in $G$, say $X$ and $Y$, such that $G[V(G)\setminus (X\cup Y)]$ is triangle-free.
\end{lemma}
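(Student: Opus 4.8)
The plan is to build the two 2-packings $X$ and $Y$ greedily, triangle by triangle, while maintaining the invariant that $X$ and $Y$ are 2-packings (i.e. each is an independent set whose vertices are pairwise at distance at least $3$) and that the portion of the graph already ``processed'' contains no triangle of $G[V(G)\setminus(X\cup Y)]$. First I would list all triangles $T_1,T_2,\dots,T_m$ of $G$. Since $G$ is cubic, two distinct triangles share at most one vertex (if they shared an edge, the two non-shared vertices would have degree at least $2$ within the union and degree $3$ overall, forcing the configuration to be $K_4$ minus an edge, which one checks still behaves well, but generically triangles are vertex-disjoint or meet in a single vertex). The key local fact is that to destroy a triangle $T$ it suffices to put \emph{one} vertex of $T$ into $X\cup Y$; and a vertex $v$ of a triangle has exactly one neighbor outside the triangle (its ``external'' neighbor), so removing $v$ affects very little of the rest of the graph.

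Next I would process the triangles one at a time. When I reach triangle $T_i$, if it has already been hit (one of its vertices is already in $X\cup Y$), I skip it. Otherwise I must choose a vertex $v\in T_i$ and a color class ($X$ or $Y$) to place it in, subject to the 2-packing constraint: $v$ must be at distance $\ge 3$ from every vertex already in that class. The obstruction to worry about is a vertex $v$ being ``blocked'' from a class because some already-chosen vertex lies within distance $2$ of $v$. The crucial counting observation is that each already-chosen vertex $w$ (which sits in some triangle $T_j$) can block only vertices within distance $2$ of it, and because $w$ has just one external neighbor, the set of vertices within distance $2$ of $w$ is small and, more importantly, the number of \emph{triangles} it can interfere with is bounded. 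I would argue that among the three vertices of $T_i$ and the two available classes — six (vertex, class) options — at most five can be blocked by the vertices chosen for neighboring triangles, so a valid choice always remains; this is where the cubic degree bound and the ``one external neighbor'' property do the real work.

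The main obstacle I anticipate is precisely this local availability argument: verifying that no triangle can be ``over-constrained'' by its neighborhood. The worrying configurations are clusters of triangles sharing vertices (so-called diamonds or necklaces of triangles, and $K_4$), and chains where the external neighbor of a vertex of $T_i$ lies in an adjacent triangle $T_j$. I would handle these by a short case analysis on how $T_i$ meets previously processed triangles: (a) $T_i$ is vertex-disjoint from all processed triangles within distance $2$ — then at most a couple of vertices can block, and two classes give ample room; (b) $T_i$ shares a vertex with a processed triangle — then that shared vertex is already in $X\cup Y$ and $T_i$ is in fact already hit, contradiction, so this case does not arise for an unhit $T_i$; (c) $T_i$ has a vertex whose external neighbor lies in a processed triangle — here I track exactly which class that processed vertex used and pick the other class for a suitable vertex of $T_i$. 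Finally, once every triangle has been hit, $G[V(G)\setminus(X\cup Y)]$ contains no triangle of $G$, and since deleting vertices cannot create new triangles, it is triangle-free; and $X$, $Y$ are 2-packings by construction, completing the proof.
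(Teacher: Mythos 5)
Your overall strategy (hit every triangle with one vertex placed into one of two 2-packings) matches the goal of the paper's lemma, but your proposal is a one-pass greedy construction, whereas the paper uses an extremal/exchange argument: it takes a pair $(A,B)$ maximizing a carefully chosen weight function (vertices in two triangles weigh more than vertices in one triangle) and derives a contradiction from any surviving triangle by \emph{swapping} vertices in and out of $A$ and $B$. That difference is not cosmetic --- it is exactly what your argument is missing.

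The concrete gap is your counting claim that among the six (vertex, class) options for an unhit triangle $T_i=abc$ ``at most five can be blocked.'' This is false. If $w$ is the external neighbor of $a$, then $w$ is at distance $1$ from $a$ and at distance $2$ from both $b$ and $c$; so a single previously chosen vertex $w\in X$ blocks \emph{all three} vertices of $T_i$ from $X$. Hence two previously chosen vertices --- the external neighbor of $a$ placed in $X$ and the external neighbor of $b$ placed in $Y$ --- block all six options, and your greedy step has no legal move. Your case~(c) (``pick the other class'') only covers the situation where one class is obstructed this way, not both. To get past this configuration you must be allowed to undo earlier choices: remove the offending vertex $w$ from $X$, insert a vertex of $T_i$ in its place, and re-house $w$ (or a vertex of $w$'s own triangle) elsewhere --- and you then need a potential function to guarantee this rewriting terminates rather than cycling. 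That is precisely the role of the weight $w(x)$ and the maximality of $w(A,B)$ in the paper's proof, together with the secondary minimization of the number of surviving triangles. Without such a mechanism the proposal does not go through. (A secondary, fixable imprecision: in a cubic graph a vertex can lie in two triangles, namely inside a diamond or $K_4$, so ``each vertex of a triangle has exactly one external neighbor'' also needs the shared-vertex cases treated separately, as the paper does in its Case~2.)
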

\begin{proof}
  For each vertex $x \in V(G)$, define its weight $w(x)$ by
\[w(x) =
\begin{cases}
k_1 & \text{if $x$ is contained in two triangles,}\\[2mm]
k_2 & \text{if $x$ is contained in exactly one triangle,}\\[2mm]
0 & \text{if $x$ is not contained in any triangle,}
\end{cases}
\]

with $k_1$ and  $k_2$ are nonzero real numbers satisfying $k_1>k_2$.

Let $X, Y \subseteq V(G)$. We say that $(X,Y)$ is a packing pair of $G$ if:
(1) $X$ and $Y$ are two disjoint $2$-packings;
(2) every vertex of $X \cup Y$ belongs to a triangle of $G$; and
(3) each triangle of $G$ contains at most one vertex from $X \cup Y$.
For a packing pair $(X,Y)$, set
$w(X,Y) = \sum_{x \in X \cup Y} w(x), \quad \text{and let } \gamma(X,Y)$ denotes the number of triangles in $G[V(G) \setminus (X \cup Y)]$.
A packing pair $(X,Y)$ is said to be maximum if $w(X,Y)$ is maximum among all packing pairs, and among all such pairs, we choose a maximum packing pair $(A,B)$ minimizing $\gamma(A,B)$.

We claim that $\gamma(A,B) = 0$. Suppose otherwise, and let $T = abc$ be a triangle in $G' = G[V(G) \setminus (A \cup B)]$. 
For each vertex $x \in \{a,b,c\}$ there exists a vertex $u \in A$ with $\operatorname{dist}(x,u) < 3$, since otherwise 
$A' = A \cup \{x\}$ yields a packing pair $(A',B)$ with $w(A',B) > w(A,B)$, contradicting the maximality of $(A,B)$. 
A symmetric statement holds for $B$. \hfill$(*)$\\
We will treat two cases:

\medskip
\noindent\textbf{Case 1.} None of the vertices of $T$ lies in two triangles.

Note that if $u \in \{a,b,c\}$ has a neighbor $x \in A \cup B$, then $x$ is the unique vertex in $A \cup B$ such that $dist(x,u) \le 2$ and the shortest path joining $x$ to $u$ does not intersect $T \setminus \{u\}$. This observation is due to the fact that $x$ lies in a triangle  that does not intersect $(A \cup B) \setminus \{x\}$, according to the definition of a packing pair. Thus, since each vertex of $T$ is within distance two of some vertex in $A$ (resp., in $B$) by $(*)$, 
and since each triangle of $G$ has at most one vertex in $A \cup B$, there exist $X \in \{A,B\}$ and $x \in X$ such that  every vertex of $T$ is at distance at least $3$ from every vertex of $X \setminus \{x\}$.  Consequently, $x$ should be adjacent to a vertex of $T$. Without loss of generality, assume $X = A$ and that $x$ is adjacent to $a$.
By definition of a packing pair, $x$ lies in a triangle $T' = xyz$.
Because none of the vertices of $T$ lies in two triangles, we have $\{a,b,c\} \cap \{x,y,z\} = \emptyset$. At least one of $y$ or $z$ must be adjacent to a vertex of $B$; otherwise, by letting $A' = (A \setminus \{x\}) \cup \{a\}$ and $B' = B \cup \{x\}$, we obtain a packing pair $(A',B')$ with $w(A',B') > w(A,B)$, a contradiction. Assume without loss of generality that $y$ is adjacent to $x' \in B$. Then $x'$ lies in a triangle $T''$. If $T'' = x'yz$, then clearly $x'$ is not contained in two triangles, and hence $w(y)>w(x')$. Thus,
$(A',B')$ with $A' = (A \setminus \{x\}) \cup \{a\}$ and $B' = (B \setminus \{x'\}) \cup \{y\}$ yields $w(A',B') = w(A,B) - k_2 + k_1 + k_2 > w(A,B)$ and $(A',B')$ is a packing pair, a contradiction. Hence, $V(T'') \cap V(T') = \emptyset$. Therefore, $y$ is at distance at least three from each vertex of $A \setminus \{x\}$ since $T''$ contains only one vertex in $A\cup B$. Setting $A' = (A \setminus \{x\}) \cup \{b,y\}$ gives $w(A',B) = w(A,B) - k_2 + 2k_2 > w(A,B)$ with $(A',B)$ is a packing pair, again a contradiction. We will treat two cases:
\medskip

\noindent\textbf{Case 2.} A vertex of $T$ lies in two triangles.

Assume $a$ belongs to a second triangle $T' \neq T$, and without loss of generality $b$ is also a vertex of $T'$, with $x$ the third vertex of $T'$.
Then $x$ is not contained in two triangles and $x \in A \cup B$ by $(*)$; assume $x \in A$. If $c$ has no neighbor in $A$, then the packing pair $(A',B)$ with 
$A' = (A \setminus \{x\}) \cup \{a\}$ satisfies $w(A',B) = w(A,B) - k_2 + k_1 > w(A,B)$, a contradiction. Hence $c$ has a neighbor $c' \in A$.
By the definition of packing pair, $c'$ lies in a triangle $T''$. Clearly, $T''$ is disjoint from $T$.
Since $c'$ is the unique vertex of $T''$ in $A \cup B$, $c$ is at distance at least $3$ from each vertex of $B$, again a contradiction.

\medskip
In all cases we reach a contradiction; thus $\gamma(A,B) = 0$, and the lemma follows.\qed
\end{proof}

\section{New Proof}

We now outline the idea behind our proof.  
Let $(A,B)$ be the packing pair provided by Lemma~2.1, so that $G[V(G)\setminus (A\cup B)]$
contains no triangles. If $G[V(G)\setminus (A\cup B)]$ contains no odd cycle, then it is bipartite and $G$
becomes $(1,1,2,2)$-packing colorable.  
To capture the process of eliminating these odd cycles, we introduce the notion of an odd-cycle 
reduction pair. Let $A',\; B' \subseteq V(G)$. We call $(A',B')$ an odd-cycle reduction pair of $(A,B)$ if: (1) $A'$ and $B'$ are two disjoint 2-packings, (2)
$A\subseteq A'$ and $B\subseteq B'$, (3) every vertex in $A'\setminus A$ or $B'\setminus B$ lies on an odd 
cycle of $G[V(G)\setminus (A\cup B)]$, and (4) every such odd cycle contains at most one vertex of $A'\cup B'$.  
For  an odd-cycle reduction pair $(A',B')$ of $(A,B)$, we define $\theta(A',B')$ to be the number of odd cycles in 
$G[V(G)\setminus (A'\cup B')]$.  
If some reduction pair satisfies $\theta(A',B')=0$, then $G$ is $(1,1,2,2)$-packing colorable.  
While it is unknown whether this occurs in all cubic graphs, we show that it does for claw-free cubic
graphs.

\begin{theorem}
Every claw-free cubic graph is $(1,1,2,2)$-packing colorable.
\end{theorem}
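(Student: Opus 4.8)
The plan is to start from the packing pair $(A,B)$ given by Lemma~\ref{keylemma}, so $H := G[V(G)\setminus(A\cup B)]$ is triangle-free, and then to kill all odd cycles of $H$ by passing to an odd-cycle reduction pair $(A',B')$ with $\theta(A',B')=0$. Once that is achieved, $G[V(G)\setminus(A'\cup B')]$ is triangle-free and bipartite, so its vertices can be 2-colored with colors $\{1,2\}$ (the two color classes are independent sets, hence $1$-packings), while $A'$ and $B'$ are $2$-packings and receive colors $\{3,4\}$; this yields the desired $(1,1,2,2)$-packing coloring. So the whole theorem reduces to producing a reduction pair with no remaining odd cycle, and the claw-freeness of $G$ must be used precisely here.

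The main step, then, is an extremal argument mirroring the proof of Lemma~\ref{keylemma}: among all odd-cycle reduction pairs $(A',B')$ of $(A,B)$, choose one minimizing $\theta(A',B')$ (possibly with a secondary weight to control the argument, as in the lemma), and suppose for contradiction that $\theta(A',B')>0$. Pick an odd cycle $C$ in $G[V(G)\setminus(A'\cup B')]$. Since $G$ is claw-free cubic, every vertex lies in a triangle; but $H$ is triangle-free, so for each vertex $v$ of $C$ the triangle through $v$ in $G$ must use at least one vertex of $A\cup B$. I would analyze, for each $v\in V(C)$, the unique neighbor structure: $v$ has two neighbors on $C$ and one neighbor off $C$; claw-freeness forces two of $v$'s neighbors to be adjacent, and since $C$ is induced (or can be taken chordless) this adjacency typically pins the off-cycle neighbor of $v$ into a triangle with a vertex already removed. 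Then, exactly as in Lemma~\ref{keylemma}, I would try to augment: if some vertex of $C$ is at distance $\ge 3$ from all of $A'$ (resp. $B'$), add it to $A'$ (resp. $B'$), breaking $C$ without creating a new odd cycle, contradicting minimality of $\theta$. Otherwise every vertex of $C$ is within distance $2$ of both $A'$ and $B'$, and I would run the same kind of local "swap" arguments — replacing a removed vertex $x$ lying in a triangle $T'=xyz$ by a cycle vertex and pushing $x$ or a better-weighted vertex of $T'$ into the other set — to again contradict extremality.

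The hard part will be the case analysis needed to handle how an odd cycle $C$ can interact with the triangles that were already destroyed by $A\cup B$: a single removed vertex could be adjacent to several vertices of $C$, or two cycle vertices could share their off-cycle neighbor, and in a claw-free cubic graph the "diamonds" ($K_4$ minus an edge) and triangle-strings make the local picture rich. I expect the crux is showing that whenever every vertex of $C$ is within distance $2$ of both $A'$ and $B'$, one can still find a legal augmentation — this is where claw-freeness (forcing a triangle at every cycle vertex, hence a removed vertex nearby with controlled structure) does the real work, just as triangle-membership of every vertex drove Case~2 of Lemma~\ref{keylemma}. A secondary subtlety is verifying that each augmentation keeps $(A',B')$ a valid odd-cycle reduction pair of $(A,B)$ — in particular that newly added vertices genuinely lie on odd cycles of $H$ and that no odd cycle of $H$ ends up with two chosen vertices — but this should follow from the same bookkeeping used in the lemma. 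Once a contradiction is reached in every case, $\theta=0$ for the chosen pair and the coloring described above completes the proof.
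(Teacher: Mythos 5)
Your setup coincides with the paper's: take the packing pair $(A,B)$ from Lemma~\ref{keylemma}, define odd-cycle reduction pairs, pick one minimizing $\theta$, and finish by $2$-coloring the bipartite remainder. But the heart of the proof --- deriving a contradiction from a surviving odd cycle $C$ --- is exactly what you leave as a plan, and the plan points in the wrong direction. You propose to replay the lemma's weight-based swap arguments on $C$; the paper does no swapping at all in this step. The actual mechanism is a short parity argument. Since $G^*=G[V(G)\setminus(A^*\cup B^*)]$ is triangle-free and $G$ is claw-free cubic, the unique triangle of $G$ through a cycle vertex $x_i$ cannot live in $G^*$, so it must consist of $x_i$, one of its two neighbors on $C$, and their \emph{common} off-cycle neighbor, which lies in $A^*\cup B^*$. (The minimality of $\theta$ is used only once, to show no vertex of $C$ lies in two triangles of $G$: such a vertex would have a single neighbor $u\in A^*\cup B^*$ forming both triangles, forcing it to be at distance at least $3$ from whichever of $A^*,B^*$ does not contain $u$, and then it could be added to that set.) Because each $x_i$ lies in exactly one triangle, this pairs consecutive vertices of $C$ into disjoint adjacent pairs; since $C$ has odd length, one vertex is left unpaired, and its three neighbors are pairwise nonadjacent, i.e., it is the center of a claw --- contradiction.

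So the gap is concrete: you have not identified why claw-freeness kills the odd cycle, and the route you sketch (augment or swap whenever every vertex of $C$ is within distance $2$ of both $A^*$ and $B^*$) is not known to close; the paper itself remarks that it is open whether $\theta$ can always be driven to $0$ in general cubic graphs, so no purely swap-based argument independent of claw-freeness can be expected to work. A secondary issue: you assert that adding a far vertex of $C$ to $A'$ "breaks $C$ without creating a new odd cycle"; what is actually needed (and what the paper uses) is only that the resulting pair is still an odd-cycle reduction pair with strictly smaller $\theta$, which holds because condition (4) lets each surviving odd cycle of $H$ absorb at most one new vertex.
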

\begin{proof}
Let $(A,B)$ be the packing pair obtained from Lemma~2.1, so that $G[V(G)\setminus (A\cup B)]$ has no triangles.  Among all odd-cycle reduction pairs of $(A,B)$, choose one, denoted $(A^*,B^*)$, such that  $\theta(A^*,B^*)$ is minimum.  We claim that $\theta(A^*,B^*) = 0$. Suppose to the contrary that 
$G^* = G[V(G)\setminus (A^*\cup B^*)]$ contains an odd cycle, say $C = x_1 x_2 \ldots x_{2k+1}$.  

Note that each vertex $x$ of $C$ is at distance less than three from some vertex in $A^*$, since otherwise  $(A^*\cup \{x\}, B^*)$ is an odd-cycle reduction-pair, contradicting the minimality of $\theta$. A similar statement holds for $B^*$.   
Since all triangles have been destroyed in $G^*$ and since $G$ is claw-free, each $x_i$ must be adjacent to a vertex of $A^*\cup B^*$.

A vertex of $C$ cannot lie in two triangles of $G$. Suppose, to the contrary, that $x_i$ lies in two triangles; assume $i=1$. Then $x_{2k+1}$, $x_1$, and $x_2$ share a common neighbor in $A^*\cup B^*$, say $u$. The two triangles containing $x_1$ are then: $x_{2k+1}x_1u$ and $x_1x_2u$. Assume $u\in A^*$, then $x_1$ is at distance at least three from all vertices in $B^*$, contradiction.  Thus each $x_i$ lies in exactly one triangle of $G$.
Consequently, $x_1$ and $x_2$ (or $x_1$ and $x_{2k+1}$) must share a common neighbor in $A^*\cup B^*$. Without loss of generality, assume that $x_1$ and $x_2$ share a neighbor.  Then $x_{2i-1}$ and $x_{2i}$ have a common neighbor in $A^*\cup B^*$ for every $i$, $2 \le i \le k$.
Consequently, $G[\{x_{2k+1}\} \cup N(x_{2k+1})]$ is a claw, a contradiction. 
Therefore, $\theta(A^*,B^*) = 0$, and the result follows by coloring the vertices of $G^*$ by $1_a$ and $1_b$, the vertices of $A^*$ by $2_a$, and the vertices of $B^*$ by $2_b$.
\end{proof}

\section{Open Problem}

The lemma established in this paper shows that triangles can be completely removed from any cubic graph by deleting two disjoint $2$-packings, and the subsequent theorem proves that every claw-free cubic graph is $(1,1,2,2)$-packing colorable. A natural question is whether the claw-free condition can be relaxed.

\medskip
\noindent\textbf{Problem.}
Let $G$ be a cubic graph in which every vertex belongs to a cycle of length $3$ or $4$. Is $G$ necessarily $(1,1,2,2)$-packing colorable?
\medskip

Beyond this, our methods suggest that claw-free cubic graphs might admit an even stronger packing coloring.

\medskip
\noindent\textbf{Problem 2.}
Is every claw-free cubic graph $(1,1,2,3)$-packing colorable?

\section*{Statements and Declarations}
\textbf{Funding:} This work was partially supported by a grant from the IMU-CDC and Simons Foundation.\\
\textbf{Data Availability:} This work describes mathematical results and is not accompanied by original research data.\\
\textbf{Author Contributions:} All authors contributed to the conception of the study, the proofs, and the writing of the manuscript. All authors read and approved the final manuscript.\\
\textbf{Conflict of Interest:} The authors declare that they have no conflict of interest.


\begin{thebibliography}{99}

\bibitem{bkl} J. Balogh, A. Kostochka and X. Liu, Packing chromatic number of cubic graphs, \textit{Discrete Math.} 341(2) (2018), 474--483

\bibitem{3} J. Balogh, A. Kostochka and X. Liu, Packing Chromatic Number of Subdivisions of Cubic Graphs, \textit{Graphs Combin.} 35(2) (2019), 513--537.

\bibitem{bf} B. Bre\v{s}ar and J. Ferme, An infinite family of subcubic graphs with unbounded packing chromatic number,
\textit{Discrete Math.} 341(8) (2018), 2337--2342.

\bibitem{bfk} B. Bre\v{s}ar, J. Ferme, S. Klavžar, D.F. Rall, A survey on packing colorings, \textit{Discuss. Math. Graph Theory} 40 4 (2020), 923-970.
 
\bibitem {11} B. Bre\v{s}ar, N. Gastineau and O. Togni, Packing colorings of subcubic outerplanar graphs, \textit{Aequationes Math.}, 94(5), pages 945-967 2020. 
 
\bibitem{8} B. Bre\v{s}ar, S. Klav\v{z}ar, D.F. Rall and K. Wash, Packing chromatic number under local changes in a graph, \textit{Discrete Math.} 340 (2017), 1110--1115.
  
\bibitem{9} B. Bre\v{s}ar, S. Klav\v{z}ar, D.F. Rall and K.Wash, Packing chromatic number, $(1,1,2,2)$-colorings, and characterizing the Petersen graph, \textit{Aequationes Math.} 91 (2017), 169--184.

\bibitem {BKR} B. Brešar, K. Kuenzel and D. F. Rall, Claw-free cubic graphs are $(1, 1, 2, 2)$-colorable, \textit{Discrete Math.} 348 (8) (2025), 114477.
\bibitem{AM1} A. El Zein and M. Mortada, Advances on the Packing Coloring Conjectures of Subcubic Graphs, \href{https://arxiv.org/abs/2503.20239}.
\bibitem{AM2} A. El Zein and M. Mortada, A Step Toward an $S$-Packing Coloring Conjecture for Subcubic Graphs, submitted.
\bibitem {16} N. Gastineau and O. Togni, $S$-packing colorings of cubic graphs, \textit{Discrete Math.} 339 (2016), 2461--2470.

\bibitem{d} W. Goddard, S.M. Hedetniemi, S.T. Hedetniemi, J.M. Harris and D.F. Rall, Broadcast chromatic numbers of graphs, \textit{Ars Combin.} 86 (2008) 33--49. 
 
\bibitem {24} D. Laiche, I. Bouchemakh and \'{E}. Sopena, Packing coloring of some undirected and oriented coronae graphs,\textit{ Discuss. Math. Graph Theory}, 37 (3) (2017), 666--690.
   
\bibitem {m} R. Liu, X. Liu, M. Rolek and G. Yu, Packing $(1,1,2,2)$-coloring of some subcubic graphs, \textit{Discrete Appl. Math.} 283 (2020), 626--630.

\bibitem{LW}  X. Liu and Y. Wang, Partition subcubic planar graphs into independent sets, arXiV 2408.12189 (2024), math.CO.

\bibitem{LZZ24} X. Liu and X. Zhang and Y. Zhang, Every subcubic graph is packing $(1,1,2,2,3)$-colorable, \textit{Discrete Math.} 348 (11) (2025). 114610. 
\bibitem{AM3} M. Mortada and A. El Zein, On the $(1,1,2,3)$-Packing Coloring of Some Subcubic Graphs, to be published soon.
\bibitem {26} M. Mortada and O. Togni, About $S$-Packing Coloring of Subcubic Graphs, \textit{Discrete Math.} 347(5) (2024).

\bibitem {27} M. Mortada and O. Togni, About $S$-Packing Coloring of 2-Saturated Subcubic Graphs, \textit{Australasian J. Combin.} 90 (2) (2024), 155–-167.

\bibitem {mai} M. Mortada, About $S$-Packing Coloring of  3-Irregular Subcubic Graphs, \textit{Discrete Appl. Math.} 359 (2024), 16--18.

\bibitem {mai1} M. Mortada and O. Togni, Further results and questions on $S$-packing coloring of subcubic graphs, \textit{Discrete Math.} 348(4) (2025), 114376.

\bibitem {25} C. Sloper, An eccentric coloring of trees, \textit{Australasian J. Combin.} 29 (2004), 303-321.

\bibitem{i} B. Tarhini and O. Togni, $S$-Packing Coloring of Cubic Halin Graphs, \textit{Discrete Applied Math.} 349(31) (2024), 53--58.

\bibitem{a}  W. Yang and B. Wu, On packing $S$-colorings of subcubic graphs, \textit{Discrete Applied Math.} 334(2023), 1--14.

\end{thebibliography}
\end{document}